\newtheorem{theorem}{Theorem}[section]
\newtheorem{proposition}[theorem]{Proposition}
\newtheorem{lemma}[theorem]{Lemma}
\newtheorem{corollary}[theorem]{Corollary}
\newtheorem{definition}[theorem]{Definition}
\newtheorem{remark}[theorem]{Remark}
\newtheorem{example}{Example}
\newenvironment{proof}{\noindent{\it Proof.\ }}{\hfill $\square$\vspace{2mm}}
\newcommand{\R}{\mathbb{R}}
\newcommand{\E}{\mathbb{E}}
\newcommand{\h}{\mathcal{H}}
\newcommand{\pr}{\mathbb{P}}
\newcommand{\1}{\mathbf{1}}
\newcommand{\D}{{\rm d}}
\newcommand{\bd}{\begin{definition}}
\newcommand{\ed}{\end{definition}}
\newcommand{\bt}{\begin{theorem}}
\newcommand{\et}{\end{theorem}}
\newcommand{\bp}{\begin{proposition}}
\newcommand{\ep}{\end{proposition}}
\newcommand{\br}{\begin{remark}}
\newcommand{\er}{\end{remark}}
\newcommand{\bl}{\begin{lemma}}
\newcommand{\el}{\end{lemma}}
\newcommand{\bc}{\begin{corollary}}
\newcommand{\ec}{\end{corollary}}
\newcommand{\beq}{\begin{equation}}
\newcommand{\eeq}{\end{equation}}
\begin{document}
\baselineskip=13pt

\title{A note on mean volume and surface densities for a class of  birth-and-growth stochastic processes}
\author{Elena Villa}
\date{\small Dept. of Mathematics, University of Milan, via Saldini 50,
20133 Milano, Italy\\
email: elena.villa@mat.unimi.it}

\maketitle

\begin{abstract}

Many real phenomena may be modelled as locally finite unions of
$d$-dimensional time dependent random closed sets in
$\mathbb{R}^d$, described by birth-and-growth stochastic
processes, so that their mean volume and surface densities, as
well as the so called mean \emph{extended} volume and surface
densities, may be studied in terms of relevant quantities
characterizing the process. We extend here known results in the
Poissonian case to a wider class of birth-and-growth stochastic
processes, proving in particular the absolute continuity of the
random time of capture of a point  $x\in\R^d$ by processes of this
class.
\end{abstract}
Keywords: Stochastic  birth-and-growth processes; random closed
sets; mean densities\\
 AMS Classification 2000: 60D05;60G55,28A75\\

\section{Problem and main results}
A great variety of real phenomena  in material science and in
biomedicine, such as crystallization processes (see \cite{libro},
and references therein; see also \cite{ubukata} for the
crystallization processes on sea shells), tumor growth
\cite{anderson,KM}, spread of fires in the woods, etc., can be
described by space-time structured stochastic birth-and-growth
processes (see, e.g., \cite{KV-surv}).
  Roughly
speaking, a \emph{birth-and-growth (stochastic) process} is a
dynamic germ-grain model \cite{stoyan,Weil}, used to model
situations in which \emph{nuclei} are born in time and are located
in space randomly, and each nucleus generates a \emph{grain} (a
random closed set) evolving in time. So it can be
 described by a
 marked point
process $N =\{(T_j,X_j)\}_{j\in\mathbb{N}}$ modelling births at
random times   $T_j \in \mathbb{R}_+$ and related random spatial
locations (\emph{nuclei})  $X_j \in \mathbb{R}^d$ ($d\geq 2)$, and
by a growth model according to which each  nucleus generates a
grain $\Theta_{T_j}^t(X_j)$ evolving in time. Under regularity
assumptions on the birth and growth model, the   union set
$\Theta^t$ of such grains at time $t$ is then a locally finite
union of random closed sets and   the mean  volume and surface
densities associated to the birth-and-growth process
$\{\Theta^t\}_t$  can be defined. Sometimes it is of interest to
consider the so-called mean \emph{extended} densities of
$\Theta^t$, defined as the mean densities of the union of the
grains $\Theta_{T_j}^t(X_j)$ ignoring overlapping; for instance
the mean density of the $d-1$-dimensional measure of the union of
the topological boundaries $\partial\Theta_{T_j}^t(X_j)$ might be
studied whenever the process $\{\Theta^t\}_t$ is given by the
union of  $(d-1)$-dimensional grains free to grow in space.  A
natural question is whether any relationship exists between these
densities and, in particular, if it is possible to describe them
in terms of relevant quantities associated with the process, like
the intensity measure of the nucleation process $N$ and the growth
rate.  In  current literature, the particular case in which $N$ is
given by a marked Poisson process has been studied extensively,
and great importance has been given to the concept of \emph{causal
cone} and its relationship with the mean (extended) volume density
(e.g., see \cite{BKM-hazard,BKP,libro,kolm2}). In particular a
relationship between the measure of the causal cone with respect
to the intensity measure of the nucleation process and the mean
extended volume density  has been proven in \cite{BKM-hazard},
where the property of independence of the grains, due to the
Poisson assumption, plays a fundamental role. Since such
quantities are well defined also for more general birth-and-growth
processes, aim of the present paper is to extend known results in
the Poissonian case to a wider family of processes. To this end we
introduce here a class of birth-and-growth processes, denoted by
$\cal G$,  satisfying quite general assumptions, and we show that
the quoted result on the mean extended volume density (see
Proposition~\ref{teo vex}) and, in particular, an equation for the
mean extended surface density (see Proposition~\ref{corollary
s-ex}) hold for any process in $\cal G$. In particular, in order
to do this, we prove that the so called time of capture $T(x)$ of
a point $x\in\R^d$ associated with a process
$\{\Theta^t\}_t\in\cal G$ is a continuous random variable with
density (see Theorem~\ref{teo T}). Examples of non-Poissonian
birth-and-growth processes in $\cal G$ are also provided.

\section{Preliminaries and notations}\label{sec notations}
We recall that a \emph{random closed set} $\Theta$ in
$\mathbb{R}^d$ is a measurable map
$\Theta:(\Omega,\mathfrak{F},\mathbb{P})\longrightarrow(\mathbb{F},\sigma_\mathbb{F}),$
where $\mathbb{F}$ denotes the class of the closed subsets in
$\mathbb{R}^d$, and $\sigma_\mathbb{F}$ is the $\sigma$-algebra
generated by the so called \emph{hit-or-miss topology} (see
\cite{matheron}).  Denoted  by  $T_j$ the $\mathbb R_+$-valued
random variable representing the \emph{time of birth} of the
$j$-th nucleus, and by $X_j$ the $\mathbb{R}^d$-valued random
variable representing the \emph{spatial location} of the nucleus
born at time $T_j$, defined on the same probability space, let
$\Theta^t_{T_j}(X_j)$
 be the random closed set obtained as the evolution up to time $t \geq T_j$ of the
 nucleus
born at time $T_j$ in $X_j$. The family  $\{\Theta^t\}_t$ of
random closed sets
  given by $$\Theta^{t}= \bigcup_{T_j \leq t}
\Theta^t_{T_j}(X_j), \qquad t \in {\mathbb R_+},$$ is called {\em
birth-and-growth (stochastic) process}. The \emph{nucleation
process} $\{(T_j,X_j)\}$ is usually described by a marked point
process (MPP) $N$ in $\mathbb{R}_+$ with marks in $\mathbb{R}^d$.
(For basic definitions and results about MPPs we refer to
\cite{daley,last-brandt,stoyan}). Thus, it is defined as a random
measure given by
$$N=\sum_{j=1}^\infty \delta_{T_j,X_j},$$  where
$\delta_{t,x}$ denotes here the Dirac measure on
$\mathbb{R}_+\times\mathbb{R}^d$ concentrated at $(t,x)$; so, for
any  $B\times A\in{\cal B}_{\mathbb{R}}\times {\cal
B}_{\mathbb{R}^d}$ (${\cal B}_{\mathbb{R}^d}$ is the Borel
$\sigma$-algebra of $\R^d$), $ N(B \times A)$ is the random number
of nuclei born in the region $A$, during time $B$. We recall that
the \emph{marginal process} $\widetilde{N}(\cdot):=N(\,\cdot\times
\mathbb{R}^d)$ is itself a point process. Throughout the paper we
denote by $\Lambda$ and $\widetilde\Lambda$ the \emph{intensity
measure} of $N$ and $\widetilde N$, respectively, so defined
$$\Lambda(B\times A):=\mathbb{E}[N(B\times A)],\qquad \widetilde\Lambda(B):=\mathbb{E}[\widetilde{N}(B)],
\qquad A\in\mathcal{B}_{\R^d},\, B\in\mathcal{B}_{\R};$$ the
measure $\widetilde\Lambda$ is usually assumed to be locally
finite, and it is well known the following decomposition of the
intensity measure  (see, e.g., \cite{last-brandt}): \beq\Lambda(\D
t\times \D x)=\widetilde{\Lambda}(\D t)Q(t,\D
x),\label{lambda}\eeq where, $\forall t\in\mathbb{R}_+$,
$Q(t,\cdot)$ is a probability measure on $\mathbb{R}^d$, called
the \emph{mark distribution} at time $t$.

 Models of volume growth have been studied extensively,
since the pioneering work by Kolmogorov \cite{kolm2} (see also
\cite{BKM-hazard}). We denote by $\h^n$ the $n$-dimensional
Hausdorff measure and recall that $\h^d(B)$ coincides with the
usual $d$-dimensional Lebesgue measure of $B$ for any Borel set
$B\subset\R^d$, while for $1\leq n < d$ integer $\h^n(B)$
coincides with the classical $n$-dimensional measure of $B$ if $B$
is contained in a $C^1$ $n$-dimensional manifold embedded in
$\R^d$. Throughout the paper we assume $d\geq 2$ and the
\emph{normal growth model} (see, e.g., \cite{BKP}), according to
which at $\mathcal{H}^{d-1}$-almost every point of the actual
grain surface at time $t$ (i.e.~at $\mathcal{H}^{d-1}$-a.e. $x\in
\partial \Theta^t_{T_j}(X_j)$) growth occurs with a given strictly positive normal
velocity \begin{equation} v(t,x)=
G(t,x)n(t,x),\label{model}\end{equation}
 where $G(t,x)$   is a given deterministic
 \emph{growth  field}, and $n(t,x)$ is the unit outer normal
 at point  $x \in \partial\Theta^t_{T_0}(X_0)$. We assume that
  $$0<g_0\leq G(t,x)\leq G_0<\infty\qquad
 \forall (t,x)\in\mathbb{R}_+\!\times\mathbb{R}^d ,$$ for some
 $g_0,G_0\in\mathbb{R}$, and  $G(t,x)$ is sufficient regular
  such that the evolution problem given by
 (\ref{model})
   for the growth front $\partial\Theta_{t_0}^t(x)$ is well
posed. It follows that for any fixed $t\in\R_+$, the topological
boundary of each grain is a random closed set with locally finite
$\h^{d-1}$-measure $\pr$-almost surely (see also
\cite{burger2002}). Furthermore, for the birth-and-growth model
defined above,
 the so-called {\em causal cone} associated with a point $x\in\mathbb{R}^d$ and a time $t\in\mathbb{R}_+$
is well defined (see e.g. \cite{BKP} for its analytical
properties).
\begin{definition}[Causal cone]\label{def cono}
The {\em causal cone} $\mathcal{C}(t,x)$  of a point $x$ at time
$t$ is the space-time region in which at least one nucleation has
to take place so that the point $x$ is covered by grains by time
$t$: $$\mathcal{C}(t,x):=\{(s,y)\in[0,t]\times\
\mathbb{R}^d\,:\,x\in\Theta_s^t(y)\}.$$
\end{definition}

To any point $x\in\R^d$ it  is also associated a random variable
$T(x)$, said the \emph{time of capture of point $x$}, defined by
\beq T(x) :=\min\{t>0\,:\,x\in\Theta^t\}.\label{T}\eeq

 We know that any random closed set
$\Theta$ in $\R^d$ with locally finite $\h^n$ measure $\pr$-a.s.,
induces a random measure
$\mu_\Theta(\cdot):=\mathcal{H}^n(\Theta\cap \,\cdot\,)$ on $\R^d$
(for a discussion of the  measurability of the random variables
$\mathcal{H}^n(\Theta\cap \,\cdot\,)$, we refer to
\cite{MB,Zahle}), and  it is clear that $\mu_{\Theta(\omega)}$ is
 singular with respect to $\h^d$ for $\pr$-a.e. $\omega\in\Omega$ if $n<d$. On the
other hand, the expected measure
$\mathbb{E}[\mu_{\Theta}](\cdot):=\mathbb{E}[\mathcal{H}^n(\Theta\cap
\cdot)]$ may be absolutely continuous with respect to $\h^d$,  in
dependence of the probability law of $\Theta$; in such case the
random closed set $\Theta$ is said to be \emph{absolutely
continuous in mean} (see \cite{KV-ias}). \\
For any fixed $t\in\R_+$ the following measures on $\R^d$
associated to a birth-and-growth process $\{\Theta^t\}_t$ as
above, and their respective densities (provided that the
topological boundary $\partial\Theta_{T_j}^t(X_j)$ of each grain
$\Theta_{T_j}^t(X_j)$ is absolutely continuous in mean), can be
introduced (see \cite{BKM-hazard,KM}):
\begin{definition}[Mean volume and surface measures and densities]\label{def measures} For
any $t\in\R_+$
\begin{itemize}
\item the measure
$\mathbb{E}[\mu_{\Theta^t}](\,\cdot\,):=\mathbb{E}[\h^d(\Theta^t\cap\cdot\,)]$
on $\mathbb{R}^d$ is said
 \emph{mean volume measure} at time $t$, while the quantity $V_V(t,x)$ such that, for any
$A\in\mathcal{B}_{\mathbb{R}^d}$,
$$\mathbb{E}[\mu_{\Theta^t}]( A)]=\int_AV_V(t,x)\D x,$$ is
called \emph{mean volume density} (or \emph{crystallinity}) at
point $x$ and time $t$ ($\D x$ stands for $\h^d(\D x)$); \item the
measure $\E[\mu_{\Theta^t}^{\rm
ex}](\,\cdot\,):=\mathbb{E}[\sum_{j: T_j\leq
t}\mathcal{H}^d(\Theta_{T_j}^t(X_j)\cap\,\cdot\,)]$ on $\R^d$ is
said \emph{mean extended volume measure} at time $t$, while the
quantity $ V_{\rm ex}(t,x)$  such that,  for any  $A \in {\cal
B}_{\mathbb R^d}$,
 $$\E[\mu_{\Theta^t}^{\rm
ex}]( A)
 = \int_A V_{\rm ex}(t,x)  \D x,$$
is called {\em mean extended volume density} at point $x$ and time
$t$; \item the measure
$\mathbb{E}[\mu_{\partial\Theta^t}](\,\cdot\,):=\mathbb{E}[\mathcal{H}^{d-1}(\partial\Theta\cap\,\cdot\,)]$
on $\R^d$ is said  \emph{mean surface measure} at time $t$, while
the quantity  $S_V(t,x)$ such that, for any
$A\in\mathcal{B}_{\mathbb{R}^d}$,
$$\mathbb{E}[\mu_{\partial\Theta^t}](A)=\int_AS_V(t,x)\D x, $$
is called  \emph{mean surface density} at point $x$ and time $t$;
\item the   measure $\E[\mu_{\partial\Theta^t}^{\rm
ex}](\,\cdot\,):=\mathbb{E}[\sum_{j: T_j\leq t}\mathcal{H}^{d-1}
(\partial\Theta_{T_j}^t(X_j)\cap\,\cdot\,)]$ on $\mathbb{R}^d$ is
said \emph{mean extended surface measure} at time $t$, while the
quantity
 $S_{\rm ex}(t,x)$  such that, for any $A \in  {\cal
B}_{\mathbb R^d}$,
$$\E[\mu_{\partial\Theta^t}^{\rm
ex}](A)
 = \int_B S_{\rm ex}(t,x)  \D x,$$ is called {\em mean extended surface density} at point $x$ and time
 $t$.
\end{itemize}
\end{definition}
In other words, the mean extended volume and surface measures
represent the mean of the sum of the volume measures  and of the
surface measures of the grains which are born and grown until time
$t$, supposed \emph{free to grow}, ignoring overlapping. Note that
in the particular case in which  $\Theta^t$ is stationary,
$V_V(t,\cdot)$ and $S_V(t,\cdot)$ are constant and they are said
\emph{volume fraction} and \emph{surface density} of $\Theta^t$,
respectively (see, e.g., \cite{stoyan}, p.\,342).\\
We mentioned that a  problem of interest in real applications is
to find relationships
 about the above mean densities, being relevant quantities describing the geometric process
 $\{\Theta^t\}_t$.  Recent results in this direction show that, if $G(t,x)$ is
such that the topological boundary of the grains satisfies a
certain regularity condition (related to rectifiability
properties), then
  an evolution equation for the mean
densities can be proved; namely (see Proposition~25 in
\cite{KV-dens}), \begin{proposition}\label{prop evol} Let
$\{\Theta^t \}_t$ be a birth-and-growth process with growth model
as above such that:
\begin{itemize}
\item the marginal process $\widetilde N$ is such that
$\E[\widetilde{N}([t,t+\Delta t])\1_{\widetilde{N}([t,t+\Delta
t])\geq 2}]=o(\Delta t)$ $\forall t>0$; \item $\forall t>0$,
denoted by $\Theta^t_r$ the closed $r$-neighborhood of $\Theta^t$
(i.e.~$ \Theta^t_r:=\{x\in\mathbb{R}^d:\,\text{$\exists y\in
\Theta^t$ with $|x-y|\leq r$}\}$), the following limit holds for
any bounded Borel set $A$ with $\h^d(\partial A)=0$
\beq\lim_{r\downarrow 0}\frac{\E[\h^d((\Theta^t_r\setminus
\Theta^t)\cap A)]}{r}=\E[\h^{d-1}(\partial \Theta^t\cap
A)];\label{eq Mink}\eeq \item the time of capture $T(x)$ is a
continuous random variable with density.
\end{itemize}
Then the following evolution equation holds in weak form
\begin{equation}
 \frac{\partial}{\partial t}V_{V}(t,x)= G(t,x) S_{V}(t,x). \label{vv}
 \end{equation}

\end{proposition}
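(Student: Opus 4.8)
The plan is to test the claimed identity against an arbitrary $\phi\in C^\infty_c(\R^d)$, $\phi\ge0$, and to analyse the difference quotients in $t$. First I would record the elementary identity
\[
V_V(t,x)=\pr(x\in\Theta^t)=\pr(T(x)\le t),
\]
obtained by Tonelli from $\E[\h^d(\Theta^t\cap A)]=\int_A\pr(x\in\Theta^t)\,\D x$ together with $\{x\in\Theta^t\}=\{T(x)\le t\}$; thus $V_V(\cdot,x)$ is the distribution function of $T(x)$, and the third hypothesis is precisely what makes $t\mapsto V_V(t,x)$ absolutely continuous (not merely nondecreasing), which is needed for the weak identity $\partial_t V_V=G\,S_V$ to be meaningful. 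Writing $m(t):=\int_{\R^d}V_V(t,x)\phi(x)\,\D x=\E\bigl[\int_{\Theta^t}\phi\,\D x\bigr]$, and using $\Theta^t\subseteq\Theta^{t+\Delta t}$, the problem reduces to the increment estimate
\[
m(t+\Delta t)-m(t)=\E\Bigl[\int_{\Theta^{t+\Delta t}\setminus\Theta^t}\phi(x)\,\D x\Bigr]=\Delta t\int_{\R^d}G(t,x)S_V(t,x)\phi(x)\,\D x+o(\Delta t).
\]

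To prove this I would split $\Theta^{t+\Delta t}$ into the time-$(t+\Delta t)$ evolution of the grains already present at time $t$ and the grains nucleated during $(t,t+\Delta t]$. A newborn grain has, by $G\le G_0$, diameter $O(\Delta t)$ and hence volume $O((\Delta t)^d)$; since $\E[\#\{\text{such grains}\}]=\widetilde\Lambda((t,t+\Delta t])$ stays bounded by local finiteness of $\widetilde\Lambda$ (and the first hypothesis even forces at most one such grain off an event of probability $o(\Delta t)$ via Markov's inequality), their total expected volume is $O((\Delta t)^d)=o(\Delta t)$, as $d\ge2$. For the old grains, $G\le G_0$ yields $\bigl(\bigcup_{T_j\le t}\Theta^{t+\Delta t}_{T_j}(X_j)\bigr)\setminus\Theta^t\subseteq\Theta^t_{G_0\Delta t}\setminus\Theta^t$, while strict positivity of the normal speed makes the union front advance outward, so on any Borel patch $A_i$ on which $G(t,\cdot)$ lies in $[\gamma_i-\varepsilon,\gamma_i+\varepsilon]$ the region added inside $A_i$ contains, up to a set near the patch interface of volume $O((\Delta t)^2)$, the one-sided $(\gamma_i-\varepsilon)\Delta t$-shell of $\Theta^t$ and is contained in its $(\gamma_i+\varepsilon)\Delta t$-shell. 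Applying hypothesis (\ref{eq Mink}) on each $A_i$ with $r=(\gamma_i\pm\varepsilon)\Delta t$, summing over a finite partition of $\mathrm{supp}\,\phi$, and then letting $\varepsilon\downarrow0$ gives
\[
\E\Bigl[\h^d\bigl((\Theta^{t+\Delta t}\setminus\Theta^t)\cap A\bigr)\Bigr]=\Delta t\,\E\Bigl[\int_{\partial\Theta^t\cap A}G(t,x)\,\h^{d-1}(\D x)\Bigr]+o(\Delta t),
\]
and by the defining property of $S_V$, together with boundedness and measurability of $G(t,\cdot)$, the right-hand expectation equals $\int_A G(t,x)S_V(t,x)\,\D x$; replacing $\1_A$ by $\phi$ yields the displayed increment.

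It remains to legitimise the interchanges. The difference quotients $\frac1{\Delta t}\bigl(V_V(t+\Delta t,\cdot)-V_V(t,\cdot)\bigr)\phi$ are dominated uniformly in $\Delta t$ on $\mathrm{supp}\,\phi$ — the estimate above already bounds the expected shell volume there by $O(\Delta t)$ — so dominated convergence passes the limit through $\E$ and through $\int\D x$, giving $m'(t)=\int_{\R^d}G(t,x)S_V(t,x)\phi(x)\,\D x$; equivalently, one integrates the increment identity over a partition of $[t_1,t_2]$ and telescopes to the integrated weak form $m(t_2)-m(t_1)=\int_{t_1}^{t_2}\int_{\R^d}G(s,x)S_V(s,x)\phi(x)\,\D x\,\D s$, then differentiates at Lebesgue points. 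This is the sense in which $\partial_t V_V(t,x)=G(t,x)S_V(t,x)$ holds in weak form.

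The step I expect to be the real obstacle is the squeezing for the old grains: converting the pointwise geometric fact ``the union front advances locally at speed $G(t,x)$'' into the displayed $o(\Delta t)$ identity requires controlling, uniformly in $\omega$ and locally uniformly in $x$, the discrepancy between the genuine time-$(t+\Delta t)$ union and the two isotropic $r$-neighbourhoods of $\Theta^t$ — i.e.\ ruling out that buried boundary pieces of individual grains, or high-curvature portions of $\partial\Theta^t$, contribute an excess of order $\Delta t$. This is exactly where the rectifiability-type regularity of the grains built into the normal growth model, hypothesis (\ref{eq Mink}), and the order of limits ($\Delta t\downarrow0$ before the partition is refined, i.e.\ before $\varepsilon\downarrow0$) must all be used.
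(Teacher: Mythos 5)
A preliminary remark on the comparison itself: the paper contains no proof of this proposition --- it is imported verbatim as Proposition~25 of \cite{KV-dens} --- so your attempt can only be judged on its own merits, not against an in-paper argument. Your skeleton is the natural one and matches the spirit of the cited result: test against $\phi$, use $V_V(t,x)=\pr(x\in\Theta^t)=\pr(T(x)\le t)$, reduce everything to the increment estimate $\E[\h^d((\Theta^{t+\Delta t}\setminus\Theta^t)\cap A)]=\Delta t\,\E\bigl[\int_{\partial\Theta^t\cap A}G(t,y)\,\h^{d-1}(\D y)\bigr]+o(\Delta t)$, dispose of grains nucleated in $(t,t+\Delta t]$ by the $O((\Delta t)^d)$ volume bound (note that this makes your first hypothesis essentially unused, which is admissible but signals a departure from the route in \cite{KV-dens}, where that condition serves to isolate the single-nucleation contribution), and recover the weak form from the absolute continuity in $t$ supplied by the density of $T(x)$.

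The genuine gap sits exactly where you yourself locate ``the real obstacle'', and flagging it is not closing it. The two-sided squeezing of the old-grain increment between the $(\gamma_i\mp\varepsilon)\Delta t$-shells of $\Theta^t$ inside each patch $A_i$ is asserted from ``strict positivity of the normal speed'', but what it actually requires is the causality/comparison (minimal travel time, eikonal-type) description of the normal growth model: a point $y\notin\Theta^t$ is covered by time $t+\Delta t$ if and only if the $1/G$-weighted travel time from $\Theta^t$ to $y$ is at most $\Delta t$, and converting this into the inclusions with radii $(\gamma_i\pm\varepsilon)\Delta t$ further needs uniform continuity of $G$ in space and time together with $g_0\le G\le G_0$ (the speed relevant to $y$ is the one on the $G_0\Delta t$-ball around $y$ during $[t,t+\Delta t]$, not the value $G(t,y)$ alone). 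None of this is argued, and it is precisely the analytical content of the growth-front results in \cite{BKP,burger2002}; without it the lower inclusion, hence the matching lower bound, is not available. Three smaller repairs are also needed: the patches $A_i$ must be shown to satisfy $\h^d(\partial A_i)=0$ before \eqref{eq Mink} may be applied to them (choose the partition levels outside the at most countably many values $a$ with $\h^d(\{G(t,\cdot)=a\})>0$, using continuity of $G(t,\cdot)$); the $O((\Delta t)^2)$ ``interface'' claim has no justification as stated (arguing pointwise with the modulus of continuity of $G$ on the $G_0\Delta t$-ball around each added point removes the interface term altogether); and the final ``telescoping'' over a partition of $[t_1,t_2]$ is not legitimate by itself, since the $o(\Delta t)$ is known only for each fixed $t$ --- what saves the passage to the integrated identity is exactly the absolute continuity of $m$ coming from the third hypothesis via Fubini, so presenting the telescoping as an ``equivalent'' route is incorrect, even though you do name the right ingredient.
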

(For a discussion about equation \eqref{eq Mink} see \cite{AColeV,KV-dens}.)\\
Note that,  by linearity arguments,  an analogous relationship for
the mean extended densities follows:
\begin{equation}
 \frac{\partial}{\partial t}V_{\rm ex}(t,x)= G(t,x) S_{\rm ex}(t,x), \label{DVS}
 \end{equation}
to be taken, as usual,  in weak form.\\ While it is easily seen
that $V_V(t,x)=\pr(x\in\Theta^t)$ for $\h^d$-a.e.\,\,$x\in\R^d$
(see Section~\ref{subsec}), an analogous result about $V_{\rm
ex}$,
 and so about $S_{\rm ex}$ by \eqref{DVS}, is    known in current literature only in the case of Poisson type nucleation
 processes. Namely, it has been shown in \cite{BKM-hazard}, Theorem~1, that if $N$ is
a marked Poisson process with intensity measure  $\Lambda(\D
t\times\D x)=\alpha(t,x)\D t\D x$ with $\alpha$ such that
$\widetilde{\Lambda}([0,t])<\infty$ and $\alpha(t,\cdot\,)\in
L^1(\R^d)$ $\forall t\in\R_+$, then \begin{equation} V_{\rm
ex}(t,x)=\Lambda(\mathcal{C}(t,x)) \label{nu-vex}\end{equation}
for $\h^1\!\times\h^d\mbox{-a.e.}~(t,x)\in\R_+\times\R^d.$ \\ By
Definition~\ref{def cono} it follows that
$V_V(t,x)=\mathbb{P}(N(\mathcal{C}(t,x))>0), $ therefore whenever
the nucleation process $N$ is given by a marked Poisson process
with intensity measure $\Lambda$ as above we have that
\beq\frac{\partial}{\partial
t}V_V(t,x)=(1-V_V(t,x))\frac{\partial}{\partial t} V_{\rm
ex}(t,x).\label{VV-Vex}\eeq

 In the next section we will show that, while
Eq.~\eqref{VV-Vex} is true only in the Poissonian case thanks to
the property of independence of increments which characterizes
Poisson processes, Eq.~\eqref{nu-vex}, and consequently a formula
for the mean extended surface density $S_{\rm ex}$  by
\eqref{DVS}, holds for a wider class of birth-and-growth
processes, which can be taken as model in various real
applications.

\section{Extensions to the non-Poissonian case}\label{sec non-poisson}
\subsection{A class of birth-and-growth stochastic processes}
\bd[The class $\mathcal{G}$] Let $\cal G$ be the family of all
birth-and-growth processes $\{\Theta^t\}_t$ with growth model as
above such that $\Theta^t$ satisfies equation \eqref{eq Mink} for
any  $t\in\R_+$ and  the following assumptions on the nucleation
process $N$ are fulfilled:
\begin{itemize}\item[(A1)]
$\E[\widetilde{N}([t,t+\Delta t])\1_{\widetilde{N}([t,t+\Delta
t])\geq 2}]=o(\Delta t)$ for all $t>0$;
 \item[(A2)] with  respect to the decomposition of $\Lambda$ in \eqref{lambda},
 $\widetilde\Lambda$ is locally finite with density $\lambda$, and
 for all $t>0$
the mark distribution $Q(t,\cdot\,)$ admits density $q(t,\cdot\,)$
on $\R^d$.
\end{itemize}
\ed A few comments about the assumptions defining the class $\cal
G$: \begin{itemize} \item  Condition (A1) is closely related to
the notion of \emph{simple} point process (see, e.g., \cite{karr})
and it is used in the proof of the evolution equation \eqref{vv}.
Besides, observing that $$\pr(\widetilde{N}([t,t+\Delta t])\geq
2)\leq\sum_{n=2}^\infty n\pr(\widetilde{N}([t,t+\Delta
t])=2)\stackrel{(A1)}{=}o(\Delta t),
$$ it guarantees that for any infinitesimal time interval $\Delta
t$ at most one nucleation can occur,
i.e.~\beq\pr(\widetilde{N}([t,t+\Delta t])>
0)=\pr(\widetilde{N}([t,t+\Delta t])=1)+ o(\Delta t),\label{prob
o}\eeq which is usually assumed in modelling many real situations.
\item Condition (A2) implies that
$\widetilde{\Lambda}([0,t])<\infty$, which is a
 common
assumption  in the theory of point processes, and, in particular,
that  the  intensity measure $\Lambda$ is absolutely continuous
with respect to $\h^1\times\h^d$, by \eqref{lambda}. This,
together with the growth model assumptions, guarantees that the
boundary of each grain is absolutely continuous in mean, so that
the  mean surface density $S_V$ and the mean extended surface
density $S_{\rm ex}$ are well defined.\\
 Further, denoted by $\mathcal{S}_x(s,t):=\{y\in \mathbb{R}^d:(s,y)\in\mathcal{C}(t,x)\}$
  the section of the causal
cone $\mathcal{C}(t,x)$ at time $s<t$ and  $S(y;x,t):=\sup\{s\geq
0\,|\,(y,s)\in\mathcal{C}(t,x)\}$ if
 $(y,0)\in\mathcal{C}(t,x)$,
Proposition~4.1 in \cite{BKP} ensures that, if  $\Lambda$ is
absolutely continuous with respect to $\h^1\times\h^d$ with
density $\alpha(t,x)$, then $\Lambda(\mathcal{C}(t,x))$ is
continuously differentiable with respect to $t$ and, in
particular,
\begin{equation} \frac{\partial}{\partial t} \Lambda(\mathcal{C}(t,x))=
G(t,x) \int_0^t  \int_{\partial {\cal S}_x(s,t)}  \alpha (s,y)\,
\D K_{x,t,s}(y)\,\D s,\label{evolvex}\end{equation} with the
measure \beq\D
K_{x,t,s}(y)=\frac{|\nabla_xS|(y;x,t)}{|\nabla_yS|(y;x,t)}\D\h^{d-1}(y).\label{K
meas}\eeq So, by condition (A2), we have that \eqref{evolvex}
holds for any process in $\cal G$ with
$\alpha(s,y)=\lambda(s)q(s,y)$.
\end{itemize}

Now we show by simple examples  that the class $\mathcal{G}$ is
not trivial and strictly contains the birth-and-growth processes
with Poissonian nucleation process.

\begin{proposition}\label{prop poisson} Let $\{\Theta^t\}_t$ be a birth-and-growth process
 with $G(t,x)$ sufficiently regular as in previous
assumptions and $N$ marked Poisson process with intensity measure
$\Lambda$ satisfying condition (A2). Then $\{\Theta^t\}_t\in\cal
G$.
\end{proposition}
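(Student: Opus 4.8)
The plan is to verify directly that a birth-and-growth process with a marked Poisson nucleation process and growth field $G$ as above satisfies all the defining requirements of $\mathcal{G}$, namely the Minkowski-content relation \eqref{eq Mink} for every $t\in\R_+$, together with conditions (A1) and (A2). Condition (A2) is essentially free: it is part of the hypothesis of the proposition, since we assumed $N$ is Poisson with intensity measure $\Lambda$ satisfying (A2). The only thing worth spelling out here is that (A2) — local finiteness of $\widetilde\Lambda$ with density $\lambda$ and existence of the mark density $q(t,\cdot)$ — is exactly one of the two nucleation hypotheses in the definition of $\mathcal{G}$, so nothing more needs to be checked for it.

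Next I would dispatch condition (A1). For a marked Poisson process, the marginal process $\widetilde N$ is an ordinary Poisson process on $\R_+$ with locally finite intensity measure $\widetilde\Lambda$ (finiteness on $[0,t]$ being guaranteed by (A2), as noted in the comments after the definition of $\mathcal{G}$). Hence $\widetilde N([t,t+\Delta t])$ is Poisson distributed with parameter $\mu_{\Delta t}:=\widetilde\Lambda([t,t+\Delta t])$, and $\mu_{\Delta t}\to 0$ as $\Delta t\downarrow 0$ by absolute continuity of $\widetilde\Lambda$. A direct computation with the Poisson law gives
\begin{equation}
\E[\widetilde N([t,t+\Delta t])\1_{\widetilde N([t,t+\Delta t])\geq 2}]
= \mu_{\Delta t}(1-e^{-\mu_{\Delta t}}) = o(\mu_{\Delta t}) = o(\Delta t),
\end{equation}
using $\mu_{\Delta t}=\int_t^{t+\Delta t}\lambda(s)\,\D s = O(\Delta t)$ at Lebesgue points of $\lambda$ (or more simply $\mu_{\Delta t}\to0$ suffices to conclude $1-e^{-\mu_{\Delta t}}\to0$). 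This establishes (A1).

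Finally, the Minkowski relation \eqref{eq Mink}: here I would appeal to the fact, recalled in the growth-model discussion of Section~2, that under the normal growth model with $0<g_0\le G\le G_0$ and $G$ sufficiently regular, for each fixed $t$ the topological boundary $\partial\Theta^t_{T_j}(X_j)$ of each grain is a.s.\ a set with locally finite $\h^{d-1}$-measure enjoying the rectifiability properties that make the outer Minkowski content of $\partial\Theta^t$ equal $\h^{d-1}(\partial\Theta^t)$; taking expectations and using the uniform bounds on $G$ to dominate the relevant $r$-neighbourhood volumes (so that dominated convergence applies to the bounded Borel set $A$), one obtains \eqref{eq Mink}. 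Since \eqref{eq Mink} is a pathwise-plus-expectation statement about the grains' geometry, it does not use the Poisson structure at all — only the growth-model hypotheses that are already in force. I expect this last point to be the main obstacle to write cleanly, since it is where one must invoke the regularity theory for the normal growth evolution (the references \cite{burger2002,BKP} and the Minkowski-content references \cite{AColeV,KV-dens}) rather than perform a self-contained argument; the Poisson-specific parts (A1) and (A2) are routine.
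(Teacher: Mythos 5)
Your proposal is correct and follows essentially the same route as the paper: (A2) holds by hypothesis, and (A1) is obtained from the Poisson law of $\widetilde N([t,t+\Delta t])$ via $\E[\widetilde N([t,t+\Delta t])\1_{\widetilde N([t,t+\Delta t])\geq 2}]=\mu_{\Delta t}(1-e^{-\mu_{\Delta t}})=\mu_{\Delta t}\,\pr(\widetilde N([t,t+\Delta t])\geq 1)$, which is exactly the paper's computation written in closed form. Your additional paragraph on \eqref{eq Mink} addresses a requirement that the paper's proof silently subsumes under the growth-model regularity assumptions (it proves only (A1), treating the rest as given), so it is a harmless, slightly more complete rendering of the same argument rather than a different approach.
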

\begin{proof} By the well known definition of marked Poisson point process
 we have that
the marginal process $\widetilde{N}$ is a Poisson process with
intensity measure $\widetilde{\Lambda}(\D t)=\lambda(t)\D t$, by
assumption (A2). So we have to prove only condition (A1).
 Let $t\in\mathbb{R}_+$ be fixed. Recalling the Poisson property
 $\pr(\widetilde{N}([t,t+\Delta t])\geq 1)=\widetilde{\Lambda}([t,t+\Delta t])+o(\widetilde{\Lambda}([t,t+\Delta
 t]))$,
 we have that
\begin{multline*}
\mathbb{E}[\widetilde{N}([t,t+\Delta
t])\1_{\widetilde{N}([t,t+\Delta t])\geq 2}]=\sum_{n=2}^\infty
n\frac{\widetilde{\Lambda}([t,t+\Delta
t])^n}{n!}e^{-\widetilde{\Lambda}([t,t+\Delta t])}\\=
\widetilde{\Lambda}([t,t+\Delta t])\pr(\widetilde{N}([t,t+\Delta
t])\geq 1)=o(\Delta t).
\end{multline*}
\end{proof}

\begin{example}\label{esempio 1}{\rm
 Let $\{\Theta^t\}_t$ be a birth-and-growth process with $G(t,x)$
 sufficiently
 regular as in previous assumptions and nucleation process $N^{(1)}$  given by the birth of
only one nucleus $(T,X)$ with $T\geq 0$ continuous
 random variable
with density and $X$ random point in $\R^d$ with distribution
$Q\ll\h^d$. Clearly $\{\Theta^t\}_t\in {\cal G}$, and for any
 $t\in\R_+$, $ V_{\rm ex}(t,x)=V_V(t,x)$ $\h^d$-a.e.
$x\in\R^d,$ since $\Theta^t=\Theta^t_T(X)$ (with
$\Theta_T^t(X)=\emptyset$ if $T>t$).}
\end{example}
In the next example we provide a non-trivial (i.e.~like $N^{(1)}$)
birth-and-growth process  belonging to the class $\cal{G}$, with
nucleation process $N$ which is not given by a marked Poisson
process.

\begin{example}\label{esempio non poisson}{\rm
Let  $G(t,x)$ be sufficiently regular as in previous assumptions,
and let $T_1\geq 0$ be a continuous random variable with
probability density function $f$.  We assume that the first
nucleus  is born at the random time $T_1$ and that a new
nucleation occurs at times $T_1+1,T_1+2,\ldots$
(i.e.~$T_j=T_1+j-1$). Let the spatial locations $X_1,X_2,\ldots$
of the nuclei be IID and independent of $T_1$, with distribution
$Q\ll\h^d$. It follows that

$\mathbb{P}(\widetilde{N}([0,t])=0)=\mathbb{P}(T_1>t) $,

$\mathbb{P}(\widetilde{N}([0,t])=n)=\mathbb{P}(t-n<T_1\leq
t-n+1),\quad \mbox{for } n=1,2,\ldots,[t]+1$,

$\mathbb{P}(\widetilde{N}([0,t])=n)=0,\quad \mbox{for }n>[t]+1 $,
\\
where $[t]$ is the integer part of $t$. As a consequence,
$\widetilde{N}([0,t])\leq [t]+1$ $\pr$-a.s. and
$$\widetilde{\Lambda}([0,t])=\sum_{n=1}^{[t]+1}n\mathbb{P}(t-n<T_1\leq t-n+1)
=\sum_{j=0}^{[t]}\pr(T_1\leq
t-j)=\sum_{j=0}^{[t]}\int_0^{t-j}f(t)\D t;
$$
thus conditions (A1) and (A2) are satisfied.}
\end{example}

\subsection{Absolute continuity of the time of capture $T(x)$}
Proposition~\ref{prop evol} gives sufficient conditions on the
birth-and-growth process for the existence of an evolution
equation for its mean densities. The condition of absolute
continuity of the time of capture $T(x)$ (defined in \eqref{T}) of
a given point $x\in\R^d$ is not trivial to check, in general. In
\cite{KV-surv} it is shown that if the mark distribution
$Q(t,\cdot\,)$ admits density on $\R^d$ for all $t>0$, then $T(x)$
is a continuous random variable, i.e.~$\pr(T(x)=t)=0$ for all
$t\in\R$; results about the absolutely continuity of $T(x)$ for
general birth-and-growth processes are not available in current
literature yet. In the following theorem we prove that for any
birth-and-growth process in $\cal G$, $T(x)$ is an absolutely
continuous random variable, i.e it admits a probability density
function.

\begin{theorem}\label{teo T}
For any birth-and-growth process in the class $\cal G$, the random
variable $T(x)$ admits probability density function for all
$x\in\R^d$.
\end{theorem}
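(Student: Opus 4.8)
The plan is to show that $\pr(T(x)\le t)$ is an absolutely continuous function of $t$ by establishing that it is locally Lipschitz, which by the Fundamental Theorem of Calculus for absolutely continuous functions gives the existence of a density. The starting point is the identity $\{T(x)\le t\}=\{x\in\Theta^t\}=\{N(\mathcal C(t,x))>0\}$, so that
\begin{equation*}
F_x(t):=\pr(T(x)\le t)=\pr(N(\mathcal C(t,x))>0),
\end{equation*}
and hence $F_x(t+\Delta t)-F_x(t)=\pr\big(N(\mathcal C(t,x))=0,\ N(\mathcal C(t+\Delta t,x))>0\big)$ for $\Delta t>0$, since the causal cones are nested in $t$. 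The event on the right says that no nucleation capable of covering $x$ by time $t$ has occurred, but at least one nucleation in the ``new'' space-time region $\mathcal C(t+\Delta t,x)\setminus\mathcal C(t,x)$ has. I would bound this probability from above by $\pr\big(N(\mathcal C(t+\Delta t,x)\setminus\mathcal C(t,x))>0\big)\le\E[N(\mathcal C(t+\Delta t,x)\setminus\mathcal C(t,x))]=\Lambda(\mathcal C(t+\Delta t,x))-\Lambda(\mathcal C(t,x))$.

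Thus it suffices to show that $t\mapsto\Lambda(\mathcal C(t,x))$ is locally Lipschitz, indeed $C^1$, and this is exactly what the remarks following the definition of $\cal G$ provide: by assumption (A2) the intensity measure $\Lambda$ is absolutely continuous with respect to $\h^1\times\h^d$ with density $\alpha(s,y)=\lambda(s)q(s,y)$, and then Proposition~4.1 of \cite{BKP} (quoted as \eqref{evolvex}--\eqref{K meas} in the text) guarantees that $\Lambda(\mathcal C(t,x))$ is continuously differentiable in $t$ with an explicit derivative. In particular $\frac{\partial}{\partial t}\Lambda(\mathcal C(t,x))$ is finite and, being continuous, is bounded on compact $t$-intervals, so $t\mapsto\Lambda(\mathcal C(t,x))$ is locally Lipschitz. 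Combining this with the bound above, $F_x(t+\Delta t)-F_x(t)\le\Lambda(\mathcal C(t+\Delta t,x))-\Lambda(\mathcal C(t,x))\le L_x\,\Delta t$ on compact intervals, so $F_x$ is locally Lipschitz, nondecreasing, with $F_x(0)=0$; therefore $F_x$ is absolutely continuous and $T(x)$ admits a density, namely $f_{T(x)}(t)=F_x'(t)$ for a.e.\ $t$, which moreover satisfies $f_{T(x)}(t)\le\frac{\partial}{\partial t}\Lambda(\mathcal C(t,x))$.

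The main obstacle I anticipate is not the Lipschitz estimate itself but making sure the one-sided increment bound is set up cleanly: one must use the monotonicity $\mathcal C(t,x)\subseteq\mathcal C(t+\Delta t,x)$ to write the difference of the distribution function as the probability of nucleation landing in the thin shell, and then pass from that probability to the expectation (Markov's inequality, valid because $N$ is a counting measure so $\1_{N>0}\le N$). A secondary point to be careful about is that \eqref{evolvex} is stated for $\Lambda$ absolutely continuous with a generic density $\alpha$; one should explicitly invoke condition (A2) to write $\alpha(s,y)=\lambda(s)q(s,y)$ and thereby confirm the hypotheses of Proposition~4.1 of \cite{BKP} are met for every process in $\cal G$. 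Everything else is routine: absolute continuity of a nondecreasing locally Lipschitz function and the resulting existence of a Radon--Nikodym density with respect to $\h^1$.
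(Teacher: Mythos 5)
Your argument is correct, and its probabilistic core is exactly the one used in the paper: the increment of the law of $T(x)$ over a small time window is bounded by the $\Lambda$-measure of the corresponding causal-cone shell (via monotonicity of $t\mapsto\mathcal{C}(t,x)$ and Markov's inequality, $\1_{N>0}\le N$), and finiteness of that bound comes from condition (A2) together with Proposition~4.1 of \cite{BKP}, i.e.\ the continuous differentiability of $t\mapsto\Lambda(\mathcal{C}(t,x))$ expressed in \eqref{evolvex} with $\alpha(s,y)=\lambda(s)q(s,y)$. Where you diverge is the measure-theoretic conclusion. The paper bounds the \emph{symmetric} ratio $P^x(B_{\Delta t}(t))/(2\Delta t)$, using both $\Lambda(\mathcal{C}(t+\Delta t,x)\setminus\mathcal{C}(t,x))$ and $\Lambda(\mathcal{C}(t,x)\setminus\mathcal{C}(t-\Delta t,x))$, and then invokes the Besicovitch derivation theorem: since the limsup of that ratio is finite for every $t>0$, the set $E$ carrying the singular part of $P^x$ is empty, hence $P^x\ll\h^1$. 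You instead bound the one-sided increment of the distribution function $F_x$ and conclude that $F_x$ is nondecreasing and locally Lipschitz, hence absolutely continuous, so a density exists. Your route is slightly more elementary (no Besicovitch needed, only the fundamental theorem of calculus for Lipschitz monotone functions) and yields as a byproduct the explicit pointwise bound $f_{T(x)}(t)\le\frac{\partial}{\partial t}\Lambda(\mathcal{C}(t,x))$, which is consistent with Proposition~\ref{teo vex} and \eqref{DVS}; the paper's derivation-theorem argument is marginally more robust in that it would still work if one only knew finiteness (not local boundedness) of the upper derivative of $\Lambda(\mathcal{C}(\cdot,x))$ at each $t$, but under (A2) and \eqref{evolvex} that extra generality is not needed. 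Two small points worth making explicit if you polish this: the subtraction $\Lambda(\mathcal{C}(t+\Delta t,x)\setminus\mathcal{C}(t,x))=\Lambda(\mathcal{C}(t+\Delta t,x))-\Lambda(\mathcal{C}(t,x))$ uses $\Lambda(\mathcal{C}(t,x))\le\widetilde\Lambda([0,t])<\infty$, guaranteed by (A2); and $F_x(0)=0$ follows from $\Lambda(\mathcal{C}(0,x))=0$, again by (A2).
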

\begin{proof}
By Besicovitch derivation theorem (see Theorem~2.22 in \cite{AFP})
we know that every  positive Radon measure $\eta$ on
$\mathbb{R}^d$ can be represented in the form
$\eta=\eta_{\ll}+\eta_{\perp},$ where $\eta_{\ll}$ and
$\eta_{\perp}$ are the absolutely continuous part of $\eta$ with
respect to $\h^d$ and the singular part of $\eta$, respectively,
and that $\eta_\perp$ is given by the restriction of $\eta$ to the
$\h^d$-negligible set \beq E=\Big\{y\in\R^d\,:\,\lim_{r\downarrow
0}\frac{\eta(B_r(y))}{\h^d(B_r(y))}=\infty\Big\},\label{set E}\eeq
where $B_r(y)$ is the
ball of radius $r$ centered in $y$.\\
Let $P^x$ be the probability measure on $\R$ of $T(x)$,
i.e.~$P^x(A):=\pr(T(x)\in A)$ for all Borel sets $A\subset\R$, and
observe that for all $t>0$
\begin{eqnarray}
P^x(B_{\Delta t}(t))&=&\pr(T(x)\in[t-\Delta t, t+\Delta t])\label{pr T}\\
&=&\pr(t<T(x)\leq t+\Delta t)+\pr(t-\Delta t<T(x)\leq
t)\nonumber\\
&=&\pr(\{N(\mathcal{C}(t+\Delta
t,x))>0\}\cap\{N(\mathcal{C}(t,x))=0\})\nonumber \\
&&\qquad\qquad+
\pr(\{N(\mathcal{C}(t,x))>0\}\cap\{N(\mathcal{C}(t-\Delta t,x))=0\})\nonumber\\
&=&\pr(\{N(\mathcal{C}(t+\Delta
t,x)\setminus\mathcal{C}(t,x))>0)\}\cap \{N(\mathcal{C}(t,x))=0\})\nonumber \\
&&\qquad\qquad+ \pr(\{N(\mathcal{C}(t,x)\setminus\mathcal{C}(
t-\Delta t,x))>0\}\cap \{N(\mathcal{C}(t-\Delta t,x))=0\})\nonumber \\
&\leq& \pr(N(\mathcal{C}(t+\Delta
t,x)\setminus\mathcal{C}(t,x))>0)+\pr(N(\mathcal{C}(t,x)\setminus\mathcal{C}(t-\Delta t,x))>0)\nonumber\\
&\leq& \E(N(\mathcal{C}(t+\Delta
t,x)\setminus\mathcal{C}(t,x))+\E(N(\mathcal{C}(t,x)\setminus\mathcal{C}(t-\Delta
t,x)))\nonumber\\
&=&\Lambda(\mathcal{C}(t+\Delta
t,x)\setminus\mathcal{C}(t,x))+\Lambda(\mathcal{C}(t,x)\setminus\mathcal{C}(t-\Delta
t,x)).\nonumber\end{eqnarray} Note that
$\mathcal{C}(s_1,x)\subset\mathcal{C}(s_2,x)$ for any $s_1,s_2$
with $s_1<s_2$, and, by assumption (A2), we know that
$\Lambda(\mathcal{C}(t,x))$ is continuously differentiable with
respect to $t$ with partial derivative given by equation
\eqref{evolvex} (with $\alpha(s,y)=\lambda(s)q(s,y)$). Then, being
$\h^1(B_{\Delta t}(t))=2\Delta t$, we get that for all $t>0$
\begin{eqnarray*}\limsup_{\Delta t\downarrow 0}\frac{P^x(B_{\Delta
t}(t))}{2\Delta t}&\stackrel{\eqref{pr T}}{\leq}&\limsup_{\Delta
t\downarrow 0}\frac{\Lambda(\mathcal{C}(t+\Delta
t,x))-\Lambda(\mathcal{C}(t,x))}{2\Delta t}+\limsup_{\Delta
t\downarrow
0}\frac{\Lambda(\mathcal{C}(t,x))-\Lambda(\mathcal{C}(t-\Delta
t,x))}{2\Delta t}\\
&=&\frac{1}{2}\Big(\frac{\partial}{\partial
t}^+\Lambda(\mathcal{C}(t,x))+\frac{\partial}{\partial
t}^-\Lambda(\mathcal{C}(t,x))\Big)\\&=&\frac{\partial}{\partial
t}\Lambda(\mathcal{C}(t,x))\stackrel{\eqref{evolvex}}{<}\infty.
\end{eqnarray*}
Thus we conclude that the set $E$ in\eqref{set E} is empty, and so
$P_\perp\equiv 0$, i.e.~$T(x)$ is an absolutely continuous random
variable.
\end{proof}

\subsection{Mean extended volume and surface densities and causal
cone}\label{subsec}
 Let us observe that
for any $d$-dimensional random closed set $\Xi$,
 by applying  Fubini's theorem (in $\Omega\times\mathbb{R}^d$,
with the product measure $\mathbb{P}\times\h^d$), we have
$$\mathbb{E}[\h^d(\Xi\cap A)]=\int_A \mathbb{P}(x\in\Xi) {\rm d} x
 \quad\forall A\in\mathcal{B}_{\mathbb{R}^d},
$$ and so, considering the birth-and-growth process
$\{\Theta^t\}_t$,
 we have that for any $t\in\R_+$
\begin{equation}V_V(t,x)=\mathbb{P}(x\in\Theta^t), \qquad
\h^d\mbox{-a.e. }x\in\R^d. \label{lim=P}\end{equation} Then it is
clear that
 Eq.~\eqref{nu-vex} may be
true for non-Poisson birth-and-growth process as well.  For
instance consider the process $\{\Theta^t\}_t$  in Example
\ref{esempio 1}; we know that in this case
 $V_{\rm
ex}(t,x)=V_V(t,x)$, so  for any $t\in\mathbb{R}_+$  the following
chain of equality holds for $\h^d$-a.e. $x\in\mathbb{R}^d$:
$$
V_{\rm
ex}(t,x)=\mathbb{P}(x\in\Theta^t)=\mathbb{P}(N(\mathcal{C}(t,x))>0)
=\mathbb{E}[N(\mathcal{C}(t,x))] =\Lambda(\mathcal{C}(t,x)).$$
Such relationship between $V_{\rm ex}$ and the causal cone is
proved  in \cite{BKM-hazard} in the Poissonian case, using the
fact that, since nuclei are assumed to be born accordingly with a
marked Poisson process,  for any fixed $t\in\mathbb{R}_+$ the
associated grains are independently and identically distributed as
a typical grain. We show here that Eq.~\eqref{nu-vex} holds for
any birth-and-growth process in $\cal G$, ans so reobtaining the
Poissonian case as special case by Proposition~\ref{prop poisson}.
 \begin{proposition}\label{teo vex} Let  $\{\Theta^t\}_t\in\mathcal{G}$.
 Then, for all $t\in\mathbb{R}_+$,
$$ V_{\rm ex}(t,x)=\Lambda(\mathcal{C}(t,x))
 \qquad \mbox{for }\h^d\mbox{-a.e.~}x\in\R^d.$$
\end{proposition}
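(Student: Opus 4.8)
The plan is to compute the mean extended volume measure directly from its definition using the Campbell formula (refined Campbell theorem) for marked point processes, and to recognize the causal cone as the indicator set that appears naturally. Recall that by Definition~\ref{def measures},
$$\E[\mu^{\rm ex}_{\Theta^t}](A)=\E\Big[\sum_{j:T_j\leq t}\h^d(\Theta^t_{T_j}(X_j)\cap A)\Big]=\E\Big[\sum_j \int_A \1_{\{x\in\Theta^t_{T_j}(X_j)\}}\,\D x\Big],$$
where the grain $\Theta^t_s(y)$ is set to $\emptyset$ when $s>t$. The first step is to interchange the sum, the expectation, and the Lebesgue integral over $A$ by Tonelli (everything is non-negative), turning the right-hand side into $\int_A \E\big[\sum_j \1_{\{x\in\Theta^t_{T_j}(X_j)\}}\big]\,\D x$. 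By the uniqueness of densities, it then suffices to prove that for $\h^d$-a.e.\ $x$,
$$\E\Big[\sum_j \1_{\{x\in\Theta^t_{T_j}(X_j)\}}\Big]=\Lambda(\mathcal{C}(t,x)).$$

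The second step is to rewrite the inner sum as an integral against the random measure $N$. Since the growth model here is deterministic given the nucleation times and positions (the field $G$ is deterministic and the evolution problem \eqref{model} is well posed), the condition $x\in\Theta^t_{s}(y)$ is a deterministic condition on the pair $(s,y)$, and by Definition~\ref{def cono} it holds precisely when $(s,y)\in\mathcal{C}(t,x)$. Hence $\sum_j \1_{\{x\in\Theta^t_{T_j}(X_j)\}}=\sum_j \1_{\mathcal{C}(t,x)}(T_j,X_j)=N(\mathcal{C}(t,x))$. Taking expectations gives $\E[N(\mathcal{C}(t,x))]=\Lambda(\mathcal{C}(t,x))$ by the definition of the intensity measure, which completes the argument.

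The essential point — and the step requiring the most care — is the measurability and Tonelli interchange in the first step, together with the justified identification $x\in\Theta^t_{T_j}(X_j)\iff (T_j,X_j)\in\mathcal{C}(t,x)$. For the former one should invoke the measurability of $\omega\mapsto\h^n(\Theta(\omega)\cap\cdot)$ discussed after \eqref{T} (cf.\ \cite{MB,Zahle}), applied to each grain, so that the integrand $\E[\sum_j\1_{\{x\in\Theta^t_{T_j}(X_j)\}}]$ is a well-defined non-negative measurable function of $x$; Tonelli then applies on $\Omega\times\R^d$ with the measure $\pr\times\h^d$, exactly as in the computation of \eqref{lim=P}. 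For the latter, one uses that the grain dynamics, being deterministic given $(T_j,X_j)$, makes $\mathcal{C}(t,x)$ the exact "success set": there is no randomness in the growth beyond the nucleation, so no independence hypothesis on the grains is needed — this is precisely why the Poisson assumption of \cite{BKM-hazard} can be dropped. The only structural input from membership in $\mathcal{G}$ that is genuinely used is assumption (A2), which guarantees $\widetilde\Lambda([0,t])<\infty$ and hence that $\Lambda(\mathcal{C}(t,x))<\infty$ (the causal cone sits inside $[0,t]\times B_{G_0 t}(x)$, a set of finite $\Lambda$-measure), so that $V_{\rm ex}(t,x)$ is finite for a.e.\ $x$ and the asserted equality is between finite quantities; the finiteness also legitimates writing $V_{\rm ex}$ as a genuine density.
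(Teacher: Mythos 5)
Your proposal is correct and follows essentially the same route as the paper's own proof: both compute $V_{\rm ex}(t,x)$ as the sum of the mean volume densities of the individual grains via Fubini--Tonelli, identify $\{x\in\Theta^t_{T_j}(X_j)\}$ with $\{(T_j,X_j)\in\mathcal{C}(t,x)\}$ by the definition of the causal cone, exchange sum and expectation, and conclude by $\E[N(\mathcal{C}(t,x))]=\Lambda(\mathcal{C}(t,x))$. Your additional remarks on measurability and on (A2) guaranteeing finiteness are sensible elaborations but do not change the argument.
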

\begin{proof}
Since $\Theta_{T_j}^t(X_j)=\emptyset$ if $T_j>t$, by the
definition of the mean extended volume measure in
Definition~\ref{def measures} we have that, for any fixed $t>0$,
$$\E[\mu_{\Theta^t}^{\rm ex}](\,\cdot\,)=\sum_j\E[\h^d(\Theta_{T_j}^t(X_j)\cap\cdot\,)], $$
and so its mean density $V_{\rm ex}(t,\cdot)$ is given by the sum
of the mean volume densities of each individual grain. Hence we
get that for $\h^d$-a.e.~$x\in\R^d$ \begin{multline*}V_{\rm
ex}(t,x)=\sum_j
\pr(x\in\Theta_{T_j}^t(X_j))=\sum_j\E[\1_{x\in\Theta_{T_j}^t(X_j)}]=
\sum_j\E[\1_{(T_j, X_j)\in\mathcal{C}(t,x)}]\\=\E[\sum_j\1_{(T_j,
X_j)\in\mathcal{C}(t,x)}]=\E[N(\mathcal{C}(t,x))]=\Lambda(\mathcal{C}(t,x)).
 \end{multline*}

\end{proof}

Now we are ready to state  the main result of this section, which
follows as a corollary of Theorem~\ref{teo T} and
Proposition~\ref{teo vex}.
 \begin{proposition}\label{corollary s-ex} For any birth-and-growth process $\{\Theta^t\}_t\in\mathcal{G}$
 the following equality for the mean extended surface density holds for all $t\in\R_+$: $$ S_{\rm
ex}(t,x)= \int_0^t  \int_{\partial {\cal S}_x(s,t)}
\lambda(s)q(s,y)\, \D K_{x,t,s}(y)\,\D s, \qquad \h^d\mbox{-a.e.
}x\in\R^d,$$ with $\D K_{x,t,s}(y)$ defined as in \eqref{K meas}.
\end{proposition}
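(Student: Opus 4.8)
The plan is to combine the three ingredients already assembled in the paper: the identification $V_{\rm ex}(t,x)=\Lambda(\mathcal{C}(t,x))$ from Proposition~\ref{teo vex}, the fact (recorded after Definition of the class $\cal G$, via Proposition~4.1 of \cite{BKP} together with assumption (A2)) that $t\mapsto\Lambda(\mathcal{C}(t,x))$ is continuously differentiable with derivative given by \eqref{evolvex} with $\alpha(s,y)=\lambda(s)q(s,y)$, and the relation \eqref{DVS} linking $\tfrac{\partial}{\partial t}V_{\rm ex}$ to $S_{\rm ex}$. The role of Theorem~\ref{teo T} is to certify that the hypotheses of Proposition~\ref{prop evol} are all met for a process in $\cal G$, so that \eqref{vv}, and hence its extended-density analogue \eqref{DVS}, is legitimately available.

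First I would observe that, by Proposition~\ref{teo vex}, for every fixed $t\in\R_+$ we have $V_{\rm ex}(t,x)=\Lambda(\mathcal{C}(t,x))$ for $\h^d$-a.e.\ $x\in\R^d$. Next, for a process in $\cal G$, assumption (A2) makes $\Lambda$ absolutely continuous with respect to $\h^1\times\h^d$ with density $\alpha(s,y)=\lambda(s)q(s,y)$, so \eqref{evolvex}--\eqref{K meas} apply and give
\[
\frac{\partial}{\partial t}V_{\rm ex}(t,x)=\frac{\partial}{\partial t}\Lambda(\mathcal{C}(t,x))=G(t,x)\int_0^t\int_{\partial{\cal S}_x(s,t)}\lambda(s)q(s,y)\,\D K_{x,t,s}(y)\,\D s .
\]
Then I would invoke \eqref{DVS}: since a process in $\cal G$ satisfies (A1) and \eqref{eq Mink} by definition of $\cal G$, and satisfies the absolute-continuity-of-$T(x)$ hypothesis by Theorem~\ref{teo T}, Proposition~\ref{prop evol} applies, yielding \eqref{vv} and therefore \eqref{DVS} in weak form. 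Comparing the right-hand side of \eqref{DVS}, namely $G(t,x)S_{\rm ex}(t,x)$, with the displayed expression for $\tfrac{\partial}{\partial t}V_{\rm ex}(t,x)$, and using $G(t,x)\geq g_0>0$ to cancel the common factor $G(t,x)$, I obtain
\[
S_{\rm ex}(t,x)=\int_0^t\int_{\partial{\cal S}_x(s,t)}\lambda(s)q(s,y)\,\D K_{x,t,s}(y)\,\D s\qquad\h^d\text{-a.e. }x\in\R^d,
\]
for each fixed $t$, which is the assertion.

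The main delicate point is the precise sense in which the identities hold: Proposition~\ref{teo vex} and the differentiability statement are pointwise-in-$t$ but only $\h^d$-a.e.\ in $x$, whereas \eqref{DVS} is a statement about weak (distributional) derivatives in $(t,x)$. So the step that needs care is passing from ``$V_{\rm ex}(t,\cdot)$ and $\Lambda(\mathcal{C}(t,\cdot))$ agree a.e.\ for each $t$, and $t\mapsto\Lambda(\mathcal{C}(t,x))$ is $C^1$'' to ``$\tfrac{\partial}{\partial t}V_{\rm ex}$, understood weakly, equals the pointwise $t$-derivative of $\Lambda(\mathcal{C}(t,x))$ a.e.'': this requires a Fubini/dominated-convergence argument using the uniform bounds $g_0\le G\le G_0$, local finiteness of $\widetilde\Lambda$, and $\alpha(t,\cdot)\in L^1(\R^d)$ to justify differentiating under the expectation and identifying the weak $t$-derivative with the classical one. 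Once that identification is in place, the cancellation of $G(t,x)$ and the conclusion are immediate, and I would then remark that the Poissonian formula of \cite{BKM-hazard,BKP} is recovered as the special case supplied by Proposition~\ref{prop poisson}.
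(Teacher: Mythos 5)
Your proposal follows exactly the paper's own route: it invokes Theorem~\ref{teo T} to verify the hypotheses of Proposition~\ref{prop evol} (hence \eqref{DVS}), then combines Proposition~\ref{teo vex} with \eqref{evolvex} under assumption (A2) and cancels the strictly positive factor $G(t,x)$. Your extra remark on reconciling the weak derivative in \eqref{DVS} with the pointwise $t$-derivative of $\Lambda(\mathcal{C}(t,x))$ is a fair point of care that the paper's terse proof leaves implicit, but it does not change the argument.
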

\begin{proof}
By the definition of the class $\cal G$ and Theorem~\ref{teo T},
Proposition~\ref{prop evol} applies and so Eq.~\eqref{DVS} holds.
Then the assertion directly follows by Proposition~\ref{teo vex}
and Eq.~\eqref{evolvex}.
\end{proof}

\section{Final remarks}
From the previous sections we know that
\beq\frac{\partial}{\partial t}V_V(t,x)=\lim_{\Delta t\downarrow
0}\frac{\mathbb{P}(x\in \Theta^{t+\Delta t}\setminus
\Theta^t)}{\Delta t}=\lim_{\Delta t\downarrow
0}\frac{\mathbb{P}(N(\mathcal{C}(t+\Delta t,x)\setminus
\mathcal{C}(t,x))\geq 1\,\cap\,N( \mathcal{C}(t,x))=0)}{\Delta
t},\label{derivata Vv}\eeq while $$\frac{\partial}{\partial
t}V_{\rm ex}(t,x)=\lim_{\Delta t\downarrow
0}\frac{\Lambda(\mathcal{C}(t+\Delta t,x)\setminus
\mathcal{C}(t,x))}{\Delta t};$$ so, in general, $V_V$ cannot be
written in terms of $V_{\rm ex}$ only, except in the trivial case
in which only one nucleation may occur (see Example \ref{esempio
1}), and in the particular case of Poissonian nucleation process.
Indeed, if $N$ is a marked Poisson process, then it is well known
that it is a Poisson point process on the product space
$\R\!\times\R^d$, and so  the events $\{N(\mathcal{C}(t+\Delta
t,x)\setminus \mathcal{C}(t,x))\geq 1\}$ and $\{N(
\mathcal{C}(t,x))= 0\}$ are independent because
$[\mathcal{C}(t+\Delta t,x)\setminus \mathcal{C}(t,x)]\cap
\mathcal{C}(t,x)=\emptyset$; therefore by \eqref{derivata Vv} and
observing that $\mathbb{P}(N( \mathcal{C}(t,x))= 0)=1-V_V(t,x)$
and $$ \pr(N(\mathcal{C}(t+\Delta t,x)\setminus
\mathcal{C}(t,x))\geq 1)=\Lambda(\mathcal{C}(t+\Delta
t,x)\setminus \mathcal{C}(t,x))+o(\Delta t),$$  we reobtain
Eq.~\eqref{VV-Vex}.

Furthermore, we mention that three
 different kinds of nucleation can be considered in order to model various real situations.
\begin{enumerate}
\item \emph{Free nucleation}. Nuclei are allowed to be born also
in an already crystallized region; i.e.~if at a random time $T_j$
a new nucleation occurs, the probability law associated to $X_j$
does not depend on the space occupied  by $\Theta^{T_j}$, so that
$X_j$ may belong to $\Theta^{T_j}$.
 \item \emph{Thinned nucleation}. Nuclei which are born in an already crystallized region are
 removed;
i.e.~if nucleation occurs according to a \emph{free} process
$N_0$, then the considered nucleation process $N$ can be described
as a ``thinning'' (e.g., see \cite{stoyan}) of the MPP $N_0$.
Namely, in accordance to the previous notations,
 we have that if $N_0=\sum_j\delta_{T_j,X_j}$, then
$$N=\sum_j\delta_{T_j,X_j}(1-\1_{\bigcup_{i=1}^{j-1}\Theta_{T_i}^{T_j-}(X_i)}(X_j)). $$
We may notice that if $N_0$ is a marked Poisson point process with
intensity measure $\Lambda_0$, then the thinned process $N$ is not
Poissonian any longer, having intensity measure $\Lambda(\D
t\times \D x)= \Lambda_0(\D t\times \D
x)(1-\mathbb{P}(x\in\Theta^{t-}))$.
  \item \emph{New nuclei are forced to be
born in the free space $\mathbb{R}^d\setminus\Theta$.} Similarly
to the thinned process described above, in modelling real
applications sometimes it is assumed that new nuclei can be born
only in the free space; for instance consider the case in which
every new nucleation occurs in the free space uniformly in a
bounded region $A\subset\R^d$, i.e.~if a nucleation $X_j$ occurs
at time $T_j$, then $X_j$ is a random point uniformly distributed
in $A\setminus\Theta^{T_j-}$. It is clear that, in this case, the
probability distribution of every mark $X_j$ associated to $T_j$
depends on the whole history of the process, and in particular on
the crystallized region at time $T_j$.
\end{enumerate}
Throughout the paper we have considered a general nucleation
process $N$, so that our results apply making no distinction
between the three types of nucleation described above.   Note that
if $\{\Theta^t\}_t$ is a birth-and-growth process in $\mathcal{G}$
with free nucleation process $N_0$, then the birth-and-growth
process with thinned nucleation $N$ associated to $N_0$ belongs to
$\mathcal{G}$ as well. This might be useful whenever the free
nucleation process is simpler to handle.
\\

\noindent{\bf Acknowledgements}  Stimulating discussions on the
subject of the paper are acknowledged to
   V. Capasso and A. Micheletti of Dept. of Mathematics of the University of Milan.

\end{document}